%%%%%%%%%%%%%%%%%%%%%%% file template.tex %%%%%%%%%%%%%%%%%%%%%%%%%
%
% This is a general template file for the LaTeX package SVJour3
% for Springer journals.          Springer Heidelberg 2010/09/16
%
% Copy it to a new file with a new name and use it as the basis
% for your article. Delete % signs as needed.
%
% This template includes a few options for different layouts and
% content for various journals. Please consult a previous issue of
% your journal as needed.
%
%%%%%%%%%%%%%%%%%%%%%%%%%%%%%%%%%%%%%%%%%%%%%%%%%%%%%%%%%%%%%%%%%%%
%
% First comes an example EPS file -- just ignore it and
% proceed on the \documentclass line
% your LaTeX will extract the file if required
%\begin{filecontents*}{example.eps}
%!PS-Adobe-3.0 EPSF-3.0
%%BoundingBox: 19 19 221 221
%%CreationDate: Mon Sep 29 1997
%%Creator: programmed by hand (JK)
%%EndComments
%gsave
%newpath
 % 20 20 moveto
 % 20 220 lineto
 % 220 220 lineto
 % 220 20 lineto
%closepath
%2 setlinewidth
%gsave
%  .4 setgray fill
%grestore
%stroke
%grestore
%\end{filecontents*}
%
\RequirePackage{fix-cm}
\documentclass[smallcondensed]{svjour3}     % onecolumn (ditto)
\smartqed  % flush right qed marks, e.g. at end of proof
 \usepackage{amssymb}
\usepackage{graphicx}
\usepackage{amsmath}
\usepackage[english]{babel}
\usepackage[latin1]{inputenc}
\usepackage{textcomp}
\usepackage{longtable}
\usepackage{listings}
\usepackage{enumerate}
\usepackage{color}
\pagestyle{empty}

%\newtheorem{theorem}{Theorem}
%\newtheorem{acknowledgement}[theorem]{Acknowledgement}
%\newtheorem{algorithm}[theorem]{Algorithm}
%\newtheorem{axiom}[theorem]{Axiom}
%\newtheorem{case}[theorem]{Case}
%\newtheorem{claim}[theorem]{Claim}
%\newtheorem{conclusion}[theorem]{Conclusion}
%\newtheorem{condition}[theorem]{Condition}
%\newtheorem{conjecture}[theorem]{Conjecture}
%\newtheorem{corollary}[theorem]{Corollary}
%\newtheorem{criterion}[theorem]{Criterion}
%\newtheorem{definition}[theorem]{Definition}
%\newtheorem{example}[theorem]{Example}
%\newtheorem{exercise}[theorem]{Exercise}
%%\newtheorem{lemma}[theorem]{Lemma}
%\newtheorem{notation}[theorem]{Notation}
%\newtheorem{problem}[theorem]{Problem}
%\newtheorem{proposition}[theorem]{Proposition}
%\newtheorem{remark}[theorem]{Remark}
%\newtheorem{solution}[theorem]{Solution}
%\newtheorem{summary}[theorem]{Summary}
%\newenvironment{proof}[1][Proof]{\noindent\textbf{#1.} }{\ \rule{0.5em}{0.5em}}
%\newcommand{\floor}[1]{\lfloor #1 \rfloor}
%
% \usepackage{mathptmx}      % use Times fonts if available on your TeX system
%
% insert here the call for the packages your document requires
%\usepackage{latexsym}
% etc.
%
% please place your own definitions here and don't use \def but
% \newcommand{}{}
%
% Insert the name of "your journal" with
% \journalname{myjournal}
%
\begin{document}

\title{Bounding robustness in complex networks under topological changes through majorization techniques%\thanks{Grants or other notes
%about the article that should go on the front page should be
%placed here. General acknowledgments should be placed at the end of the article.}
}
%\subtitle{Do you have a subtitle?\\ If so, write it here}

%\titlerunning{Short form of title}        % if too long for running head

\author{G.P.Clemente         \and
        A.Cornaro }

%\authorrunning{Short form of author list} % if too long for running head

\institute{G.P. Clemente \at
              Department of Mathematical Sciences, Mathematical Finance and Econometrics \\ Università  Cattolica del Sacro Cuore \\
              \email{gianpaolo.clemente@unicatt.it}           %  \\
%             \emph{Present address:} of F. Author  %  if needed
           \and
           A. Cornaro \at
            Department of Mathematical Sciences, Mathematical Finance and Econometrics \\ 
              Università  Cattolica del Sacro Cuore \\
               \email{alessandra.cornaro@unicatt.it}           %  \\
}

\date{}
% The correct dates will be entered by the editor

\maketitle

\fontsize{9.15}{11.5}\selectfont
\begin{abstract}
Measuring robustness is a fundamental task for analyzing the structure of complex networks. Indeed, several approaches to capture the robustness properties of a network have been proposed.	
In this paper we focus on spectral graph theory where robustness is measured by means of a graph invariant called Kirchhoff index, expressed in terms of eigenvalues of the Laplacian matrix associated to a graph. This graph metric is highly informative as a robustness indicator for several real-world networks that can be modeled as graphs.
We discuss a methodology aimed at obtaining some new and tighter
bounds of this graph invariant when links are added or removed. We take advantage of real analysis techniques, based on majorization theory and optimization of functions which preserve the majorization order (Schur-convex functions).
Applications to simulated graphs show the effectiveness of our bounds, also in providing meaningful insights with respect to the results obtained in the literature.
\end{abstract}
\keywords{Robustness \and Kirchhoff Index \and Majorization technique \and Schur-convex functions}
\section{Introduction}
\label{intro}

Assessing and improving robustness of complex networks is a challenge that has gained increasing attention in the literature. Network robustness research has indeed been carried out by scientists with different backgrounds, like mathematics, physics, computer science and biology. As a result, quite a lot of different approaches to capture the robustness properties of a network have been undertaken. Traditionally, the concept of robustness was mainly centered on graph connectivity. Recently, a more contemporary definition has been developed. According to \cite{sydney}, it is defined as the ability of a network to maintain its total throughput under node and link removal. Under this definition, the dynamic processes that run over a network must be taken into consideration. \\
In this framework several robustness metrics based on network topology or spectral graph theory have been proposed (see \cite{Boccaletti},\cite{Costa},\cite{Dorogo}). In particular, we focus on spectral graph theory where robustness is measured by means of functions of eigenvalues of the Laplacian matrix associated to a graph (\cite{Ellens},\cite{VanMieghem}). Indeed, this paper is aimed to the inspection of a graph measure called \emph{effective graph resistance}, also known as \emph{Kirchhoff index} (or \emph{resistance distance}), derived from the field of electric circuit analysis (\cite{Klein}). The Kirchhoff index has undergone intense scrutiny in
recent years and a variety of techniques have been used, including
graph theory, algebra (the study of the Laplacian and of the normalized Laplacian),
electric networks, probabilistic arguments involving hitting times of random
walks (\cite{Broder},\cite{Chandra}) and discrete potential theory (equilibrium measures and Wiener capacities),
among others. It is defined as the accumulated effective resistance between all pairs of vertices.\\ 
%This index is widely used in Mathematical Chemistry, Computational Biology and, more generally in Network Analysis in order to describe the graph topology. \\
It is worth pointing out that the Kirchhoff index can be highly valuable and informative as a robustness measure of a network, showing the ability of a network to continue performing well when it is subject to failure and/or attack (see, for instance, \cite{WangP}). In fact, the pairwise effective resistance measures the vulnerability of a connection between a pair of vertices that considers both the number of paths between the vertices and their length. Therefore, a small value of the effective graph resistance indicates a robust network. In this framework, several works studied the Kirchhoff index in networks under topological changes, such as characterized by a link addition or removal. For example, Ghosh et al \cite{Gosh} study the minimization of the effective graph resistance by allocating link weights in weighted graphs.
Van Mieghem et al in \cite{VanM2} show the relation between the Kirchhoff index and the linear degree correlation coefficient. Abbas et al in \cite{Abbas} reduce the Kirchhoff index of a graph by adding links in a step-wise way. Finally, Wang et al. in \cite{WangP} focus on Kirchhoff index as an indicator of robustness in complex networks when a single link is added or removed. In particular, being the calculation of this index computationally intensive for large networks, they provide upper and lower bounds after one link addition or removal (see \cite{dehmer}, \cite{BCT2}). \\
%Some of them are based on the relation with the algebraic connectivity (\cite{Friedler}), another measure of network robustness. \\
In this paper, we discuss a methodology aimed at obtaining some new and tighter bounds of this graph invariant when links are added or removed.  Our strategy takes advantage of real analysis techniques, based on majorization theory and optimization of functions which preserve the majorization order, the so-called \emph{Schur-convex} functions.  One major advantage of this approach is
to provide a unified framework for recovering many well-known upper and lower bounds obtained with a variety of methods, as well
as providing  better ones. It is worth pointing out that the localization of topological indices is typically carried out by applying
classical inequalities such as the Cauchy-Schwarz inequality or the arithmetic-geometric-harmonic mean inequalities.
Within this framework, we provide new lower bounds when one or more links are added or removed. This proposal represents a novelty because, to the best of our knowledge, the existing bounds (see \cite{WangP}) are based on the assessment of robustness after only one link addition or removal. Additionally, even in this case, we show that our bounds perform better.\\
The remainder of this paper is organized as follows. In Section \ref{not} some notations and preliminaries are given. In Section \ref{bou} we provide new lower bounds for the Kirchhoff Index under topological changes. Section \ref{num} shows how the bounds determined in Section \ref{bou} improve those presented in the literature. Conclusions follow.

\section{Notation and Preliminaries}\label{not}
\subsection{Basic graph concept and the Kirchhoff index}
 Let us firstly recall some basic graph concepts.

\noindent Let $G=(V,E)$ be a simple, connected and undirected graph, where $%
V=\left\{ v_{1},...,v_{n}\right\} $ is the set of vertices and $E\subseteq
V\times V$ is the set of links. We consider graphs with fixed order $\left\vert
V\right\vert =n\ $ and fixed size $\left\vert E\right\vert =m.$ Let $\mathbf{\pi}
=(d_{1},d_{2},..,d_{n})$ denote the degree sequence of $G$ arranged in non-increasing order $d_{1}\geq d_{2}\geq \cdots \geq d_{n}$, being $d_{i}$ the
degree of vertex $v_{i}$. It is well known that $\overset{n}{\underset{i=1}{\sum }}d_{i}=2m$.  \\
Let $A(G)$ be the adjacency matrix of $G$ and $\lambda
_{1}\geq \lambda _{2}\geq ...\geq \lambda _{n}$ be its (real)
eigenvalues. Given the diagonal matrix $D(G)$ of vertex degrees, the matrix $%
L(G)=D(G)-A(G)$ is known as the Laplacian matrix of $G$. Let $\mu _{1}\geq
\mu _{2}\geq ...\geq \mu _{n}$ be its eigenvalues. Hence, we can express the
following well-known properties of spectra of $L(G)$:%
\begin{equation*}
\overset{n}{\underset{i=1}{\sum }}\mu _{i}=2m;\,\,\,\mu _{1}\geq
1+d_{1}\geq \dfrac{2m}{n};\,\,\,\mu_{2}\geq d_{2};\,\,\,\mu _{n}=0,\text{ }\mu _{n-1}>0.
\end{equation*}%
For more details refer to \cite{Grone1994} and \cite{Harary}.

The \textit{Kirchhoff index} $K(G)$ of a simple connected graph $G$ was defined by Klein
and Randi\'{c} in \cite{Klein} as \[K(G)=\sum_{i<j}R_{ij},\]
where $R_{ij}$ is the effective resistance between vertices $i$ and $j$,
which can be computed using Ohm's law. \\
In addition to its original definition, the Kirchhoff index can be rewritten as
\begin{equation}
	K(G)=n\sum_{i=1}^{n-1}{\frac{1}{{\mu_{i}}}},  \label{kirchhoff}
\end{equation}%
in terms of the eigenvalues of the Laplacian matrix $L$ (see \cite{Gutman}, \cite{Zhu}).

\subsection{Majorization theory}

We briefly recall some basic concepts about majorization order and Schur
convexity. For more details see \cite{dehmer,BCT1,Marshall}.

\begin{definition}
	Given two vectors $\mathbf{y}$, $\mathbf{z\in }$ $D=\{\mathbf{x}\in \mathbb{R%
	}^{n}:x_{1}\geq x_{2}\geq ...\geq x_{n}\}$, the majorization order $\mathbf{y%
	}\trianglelefteq \mathbf{z}$ means:
	\begin{equation*}
	\begin{cases}
	\left\langle \mathbf{y},\mathbf{s}^{\mathbf{k}}\right\rangle \leq
	\left\langle \mathbf{z},\mathbf{s}^{\mathbf{k}}\right\rangle ,\text{ }%
	k=1,...,(n-1) \\[3mm]
	\left\langle \mathbf{y},\mathbf{s}^{\mathbf{n}}\right\rangle =\left\langle
	\mathbf{z},\mathbf{s}^{\mathbf{n}}\right\rangle%
	\end{cases}%
	\end{equation*}%
	where $\left\langle \cdot ,\cdot \right\rangle $ is the inner product in $%
	\mathbb{R}^{n}$ and $\mathbf{s^{j}}=[\underbrace{1,1,\cdots ,1}_{j},%
	\underbrace{0,0,\cdots 0}_{n-j}],\quad j=1,2,\cdots ,n.$
\end{definition}

\noindent Given a closed subset $S\subseteq \Sigma_{a} = D\cap \{\mathbf{x}%
\in \mathbb{R}_{+}^{n}:\left\langle \mathbf{x}, \mathbf{s^{n}}\right\rangle
=a\}$, where $a$ is a positive real number, let us consider the following
optimization problem
\begin{equation}  \label{eq:minimization}
\mathrm{Min}_{\mathbf{x} \in S} \,\, \phi (\mathbf{x}).
\end{equation}

\noindent If the objective function $\phi$ is Schur-convex, i.e. $\mathbf{x}%
\trianglelefteq \mathbf{y}$ implies $\phi (\mathbf{x})\leq $ $\phi (\mathbf{y%
})$, and the set $S$ has a minimal element $\mathbf{x}_{\ast }(S)$ with
respect to the majorization order, then $\mathbf{x}_{\ast }(S)$ solves
problem \eqref{eq:minimization},  that is
\begin{equation*}
\phi(\mathbf{x}) \ge \phi(\mathbf{x} _{\ast }(S)) \,\, \mathrm{\ for } \,
\mathrm{all} \,\, \mathbf{x} \in S.
\end{equation*}

\noindent It is worthwhile to notice that if $S^{\prime }\subseteq S$ the inequality $\mathbf{x}_{\ast }(S)\trianglelefteq \mathbf{x}
_{\ast }(S^{\prime })$ holds and thus
\begin{equation}
\phi (\mathbf{x})\geq \phi (\mathbf{x}_{\ast }(S^{\prime }))\geq \phi (%
\mathbf{x}_{\ast }(S))\,\,\mathrm{for}\,\mathrm{all}\,\,\mathbf{x}\in
S^{\prime }.  \label{1.a}
\end{equation}

On the other hand, if the objective function $\phi $ is Schur-concave, i.e. $%
-\phi $ is Schur-convex, then
\begin{equation}
\phi (\mathbf{x})\leq \phi (\mathbf{x}_{\ast }(S^{\prime }))\leq \phi (%
\mathbf{x}_{\ast }(S)\,\,\mathrm{for}\,\mathrm{all}\,\,\mathbf{x}\in
S^{\prime }.  \label{1.b}
\end{equation}

A very important class of Schur-convex (Schur-concave) functions can be
built adding convex (concave) functions of one variable. Indeed, given an
interval $I\subset
%TCIMACRO{\U{211d} }%
%BeginExpansion
\mathbb{R}
%EndExpansion
$, and a convex function $g:I\rightarrow
%TCIMACRO{\U{211d} }%
%BeginExpansion
\mathbb{R}
%EndExpansion
$, the function $\phi (\mathbf{x})=\sum_{i=1}^{n}g(x_{i})$ is Schur-convex
on $I^{n}=\underbrace{I\times I \times \cdots \times I}_{n- times}$. The
corresponding result holds if $g$ is concave on $I^n$.

\noindent In \cite{BCT1}, the authors derived the maximal and minimal
elements, with respect to the majorization order, of the set
\begin{equation*}
S_{a}=\Sigma _{a}\cap \{\mathbf{x}\in \mathbb{R}^{n}:M_{i}\geq x_{i}\geq
m_{i},i=1,\cdots ,n\}
\end{equation*}
where $M_{1}\geq M_{2}\geq \cdots \geq M_{n},$ $m_{1}\geq m_{2},\cdots \geq
m_{n}$.

In particular, in the sequel, we need the following result.

\begin{theorem}
	(see \cite{BCT1}, Theorem 8)
	\label{th:minimo} Let $k \ge 0$ and $d \ge 0$ be the smallest integers such
	that
	
	\begin{itemize}
		\item[1)] $k + d <n $
		
		\item[2)] $m_{k+1} \le \rho \le M_{n-d}$ where $\rho= \dfrac { a - \sum_{i=1}^k m_{i}-\sum_{i=n-d+1}^n M_{i}}{n-k-d}$.
	\end{itemize}
	
	Then
	\begin{equation*}
	\mathbf{x_{\ast }}(S_{a})=[m_1, \cdots , m_k, \rho^{n-d-k},
	M_{n-d+1} \cdots, M_n].
	\end{equation*}
\end{theorem}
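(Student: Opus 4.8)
The plan is to use the partial-sum characterization of majorization: for $\mathbf{x}_*,\mathbf{x}\in\Sigma_a$ (so both have total $a$), one has $\mathbf{x}_*\trianglelefteq\mathbf{x}$ if and only if $\sum_{i=1}^{j}(x_*)_i\le\sum_{i=1}^{j}x_i$ for every $j=1,\dots,n-1$. Hence it suffices to prove that the candidate $\mathbf{x}_*=[m_1,\dots,m_k,\rho^{n-d-k},M_{n-d+1},\dots,M_n]$ is feasible and that it simultaneously minimises every head sum $\sum_{i=1}^{j}x_i$ over $S_a$. First I would verify $\mathbf{x}_*\in S_a$. Condition 1) guarantees $n-d-k\ge 1$, so the flat block is nonempty and $\rho$ is well defined. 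The box constraints on that block reduce, by the monotonicity $m_{k+1}\ge\cdots\ge m_{n-d}$ and $M_{k+1}\ge\cdots\ge M_{n-d}$, to the single requirement $m_{k+1}\le\rho\le M_{n-d}$ of condition 2); the constraints on the first $k$ and last $d$ coordinates are immediate from $m_i\le M_i$; what remains is that $\mathbf{x}_*$ be non-increasing across the two junctions, i.e. $m_k\ge\rho$ and $\rho\ge M_{n-d+1}$.

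Granting feasibility, I would check the head-sum inequalities in three ranges of $j$. For $j\le k$ the inequality $\sum_{i=1}^{j}m_i\le\sum_{i=1}^{j}x_i$ is immediate from $x_i\ge m_i$. For $j>n-d$ I would pass to the complementary tail and use $a=\sum_i x_i=\sum_i (x_*)_i$, which turns the claim into $\sum_{i=j+1}^{n}x_i\le\sum_{i=j+1}^{n}M_i$, true coordinatewise since $x_i\le M_i$.

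The heart of the argument is the middle range $k<j\le n-d$, where I would again work on the tail. Summing $x_i\ge m_i$ for $i\le k$ and $x_i\le M_i$ for $i\ge n-d+1$ and invoking the total constraint yields $\sum_{i=k+1}^{n-d}x_i\le a-\sum_{i=1}^{k}m_i-\sum_{i=n-d+1}^{n}M_i=(n-d-k)\rho$, by the very definition of $\rho$. Since $\mathbf{x}\in D$, the block $x_{k+1}\ge\cdots\ge x_{n-d}$ is non-increasing, so the average of its lowest $n-d-j$ entries does not exceed the average of the whole block; thus $\sum_{i=j+1}^{n-d}x_i\le\frac{n-d-j}{n-d-k}\sum_{i=k+1}^{n-d}x_i\le(n-d-j)\rho$. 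Adding $\sum_{i=n-d+1}^{n}x_i\le\sum_{i=n-d+1}^{n}M_i$ gives $\sum_{i=j+1}^{n}x_i\le(n-d-j)\rho+\sum_{i=n-d+1}^{n}M_i=\sum_{i=j+1}^{n}(x_*)_i$, which is exactly the desired head-sum inequality after complementation.

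The step I expect to be the main obstacle is the feasibility check, specifically the two junction inequalities $m_k\ge\rho\ge M_{n-d+1}$ required for $\mathbf{x}_*\in D$. These do not follow from condition 2) by itself, but from the \emph{minimality} of $k$ and $d$: minimality forces condition 2) to fail at $k-1$ and at $d-1$, and tracking how the flat level $\rho$ changes when a single coordinate is moved between the flat block and a boundary block converts these failures into $\rho\le m_k$ and $\rho\ge M_{n-d+1}$ (moving a coordinate down to $m_k$ lowers $\rho$, moving one down to $M_{n-d+1}$ raises it). Making this monotonicity-in-$(k,d)$ bookkeeping precise is the delicate combinatorial point; once it is in place, the three-case computation above shows that the single vector $\mathbf{x}_*$ attains the minimum of every head sum and is therefore the majorization-minimal element of $S_a$.
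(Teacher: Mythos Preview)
The paper does not prove this theorem; it is quoted verbatim from \cite{BCT1} (their Theorem~8) and used as a black box. So there is no proof in the present paper to compare against, and your outline must be judged on its own.

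Your overall strategy---show feasibility of the candidate, then show it minimizes every head sum---is the right one, and your treatment of the ranges $j\le k$ and $j>n-d$ is fine. The gap is in the middle range $k<j\le n-d$. You write that ``summing $x_i\ge m_i$ for $i\le k$ and $x_i\le M_i$ for $i\ge n-d+1$ and invoking the total constraint yields $\sum_{i=k+1}^{n-d}x_i\le (n-d-k)\rho$''. The two premises pull in opposite directions: from $x_i\ge m_i$ you get $-\sum_{i\le k}x_i\le -\sum_{i\le k}m_i$, but from $x_i\le M_i$ you get $-\sum_{i>n-d}x_i\ge -\sum_{i>n-d}M_i$, so no inequality on the middle block follows. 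A concrete counterexample: $n=4$, $a=4$, $m_i=0$, $M_1=M_2=M_3=3$, $M_4=\tfrac12$. Here $k=0$, $d=1$, $\rho=\tfrac{7}{6}$, and for $x=(\tfrac43,\tfrac43,\tfrac43,0)\in S_a$ one has $\sum_{i=1}^{3}x_i=4>\tfrac72=(n-d-k)\rho$; in particular $\sum_{i=2}^{3}x_i=\tfrac83>(n-d-j)\rho=\tfrac73$ for $j=1$, so your tail bound fails even though the head-sum inequality $x_1\ge\tfrac76$ does hold.

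The fix is a case split on the sign of $\sum_{i=k+1}^{n-d}x_i-(n-d-k)\rho$. If this quantity is $\ge 0$, use the \emph{head}: by monotonicity $\sum_{i=k+1}^{j}x_i\ge\frac{j-k}{n-d-k}\sum_{i=k+1}^{n-d}x_i\ge(j-k)\rho$, and combine with $\sum_{i\le k}x_i\ge\sum_{i\le k}m_i$. If it is $\le 0$, your tail argument goes through: $\sum_{i=j+1}^{n-d}x_i\le\frac{n-d-j}{n-d-k}\sum_{i=k+1}^{n-d}x_i\le(n-d-j)\rho$, and combine with $\sum_{i>n-d}x_i\le\sum_{i>n-d}M_i$. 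Either branch gives the desired head-sum inequality. Your discussion of the junction inequalities $m_k\ge\rho\ge M_{n-d+1}$ via minimality of $(k,d)$ is on the right track but needs the same kind of care, since ``smallest integers'' is ambiguous as stated and one must argue that decreasing $k$ (resp.\ $d$) alone already violates condition~2).
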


From Theorem \ref{th:minimo} the next corollaries follow

\begin{corollary}
	(see \cite{Marshall})\label{cor:marshall2} Let $0\leq m<M$ and $m\leq \dfrac{%
		a}{n}\leq M.$ Then $x_{\ast }(S_{1})=\dfrac{a}{n}\mathbf{s^{n}}$.
\end{corollary}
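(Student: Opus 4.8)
The plan is to recognize Corollary~\ref{cor:marshall2} as the simplest instance of Theorem~\ref{th:minimo}, namely the case in which all lower bounds coincide, $m_{1}=\cdots =m_{n}=m$, and all upper bounds coincide, $M_{1}=\cdots =M_{n}=M$. With these uniform bounds the feasible set reduces to $S_{a}=\Sigma_{a}\cap \{\mathbf{x}:m\leq x_{i}\leq M,\ i=1,\dots ,n\}$, and the claim is that its minimal element with respect to the majorization order is the vector with all entries equal to $a/n$, i.e. $\frac{a}{n}\mathbf{s^{n}}$.

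First I would test the candidate $k=0$ and $d=0$ in Theorem~\ref{th:minimo}. Condition~1) becomes $0<n$, which holds. With these values the two boundary sums $\sum_{i=1}^{0}m_{i}$ and $\sum_{i=n+1}^{n}M_{i}$ are empty, so the defining quantity collapses to $\rho =\frac{a}{n}$. Condition~2) then reads $m_{1}\leq \frac{a}{n}\leq M_{n}$, that is $m\leq \frac{a}{n}\leq M$, which is precisely the standing hypothesis of the corollary. Hence $k=0,\ d=0$ satisfy both requirements, and since no smaller nonnegative integers exist, they are automatically the smallest such pair demanded by the theorem.

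Plugging $k=d=0$ into the conclusion of Theorem~\ref{th:minimo} reduces the expression $[m_{1},\dots ,m_{k},\rho^{n-d-k},M_{n-d+1},\dots ,M_{n}]$ to $[\rho^{\,n}]$, a vector whose $n$ components all equal $\rho =\frac{a}{n}$. This is exactly $\frac{a}{n}\mathbf{s^{n}}$, yielding the stated minimal element.

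The argument is essentially a verification rather than a derivation, so there is no genuine analytic obstacle. The only point requiring care is the bookkeeping at $k=d=0$: one must confirm that the empty boundary sums vanish and that condition~2) reproduces the hypothesis $m\leq \frac{a}{n}\leq M$ exactly, neither a strictly weaker nor a strictly stronger inequality. The assumption $m<M$ is not needed for this reduction; it merely guarantees that the admissible interval is nondegenerate. Once this identification with Theorem~\ref{th:minimo} is made, the corollary follows immediately with no further computation.
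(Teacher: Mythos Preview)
Your argument is correct and is exactly the route the paper indicates: it states the corollary immediately after Theorem~\ref{th:minimo} with the remark that ``the next corollaries follow,'' without spelling out the verification. Your identification of $S_{1}$ with the uniform-bound case $m_{i}=m$, $M_{i}=M$ and the check that $k=d=0$ yields $\rho=a/n$ is precisely the intended specialization.
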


\begin{corollary}
	(see \cite{BCT1}, Corollary 14) \label{theorem:S_2} Let us consider the set
	\begin{equation*}
	S_{2}^{\left[ h\right] }=\Sigma _{a}\cap \{\mathbf{x}\in \mathbb{R}%
	^{n}:x_{i}\geq \alpha ,\text{ }i=1,\cdots ,h,\text{ }1\leq h\leq n,\text{ }%
	0<\alpha \leq \frac{a}{h}\}  \label{S2}
	\end{equation*}%
	Then
	\begin{equation*}
	x_{\ast }(S_{2}^{\left[ h\right] })=\left\{
	\begin{array}{cc}
	\dfrac{a}{n}\mathbf{s^{n}} & \text{ if }\alpha \leq \dfrac{a}{n} \\
	\alpha \mathbf{s^{h}}+\rho \mathbf{v^{h}}\text{ with }\rho =\dfrac{a-\alpha h%
	}{n-h} & \text{ if }\alpha >\dfrac{a}{n}%
	\end{array}%
	\right. .
	\end{equation*}
\end{corollary}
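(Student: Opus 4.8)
The plan is to obtain this corollary as a direct specialization of Theorem~\ref{th:minimo}, reading off $S_2^{[h]}$ as the box-constrained set $S_a$ of that theorem. First I would match the parameters: the constraints $x_i\ge\alpha$ for $i=1,\dots,h$ give $m_i=\alpha$ for $i\le h$ and $m_i=0$ for $i>h$, while the absence of any upper constraint means $M_i=+\infty$ for all $i$ (here $\mathbf{v^h}=\mathbf{s^n}-\mathbf{s^h}$ is the indicator of the last $n-h$ coordinates). Because the term $\sum_{i=n-d+1}^{n}M_i$ entering $\rho$ must be finite, the only admissible choice is $d=0$; equivalently one runs the theorem with arbitrarily large finite upper bounds and lets them tend to infinity, the location of the minimal element depending on the $M_i$ solely through $d$. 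With $d=0$ the formula of Theorem~\ref{th:minimo} collapses to $\mathbf{x}_{\ast}=[m_1,\dots,m_k,\rho^{\,n-k}]$ with $\rho=(a-\sum_{i=1}^{k}m_i)/(n-k)$, so the whole problem reduces to finding the smallest $k$ satisfying condition~2), i.e.\ $m_{k+1}\le\rho$.

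Next I would split on the sign of $\alpha-a/n$. If $\alpha\le a/n$, I would test $k=0$: then $\rho=a/n$ and condition~2) reads $m_1=\alpha\le a/n$, which holds; hence $k=0$ is minimal and $\mathbf{x}_{\ast}=\rho^{\,n}=\frac{a}{n}\mathbf{s^n}$. If instead $\alpha>a/n$, I would first rule out every $k$ with $0\le k<h$: there $m_{k+1}=\alpha$ and $\sum_{i=1}^{k}m_i=k\alpha$, so condition~2) becomes $\alpha\le(a-k\alpha)/(n-k)$, which rearranges exactly to $\alpha\le a/n$ and therefore fails. At $k=h$ one has $m_{h+1}=0$, so condition~2) reduces to $0\le\rho=(a-\alpha h)/(n-h)$, guaranteed by the standing hypothesis $\alpha\le a/h$; thus $k=h$ is the smallest admissible index and $\mathbf{x}_{\ast}=[\alpha^{\,h},\rho^{\,n-h}]=\alpha\mathbf{s^h}+\rho\mathbf{v^h}$ with $\rho=(a-\alpha h)/(n-h)$, as asserted.

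I would close with two consistency checks. At the threshold $\alpha=a/n$ the second branch gives $\rho=(a-(a/n)h)/(n-h)=a/n$, so both branches return $\frac{a}{n}\mathbf{s^n}$ and the two cases agree; moreover one verifies $m_h=\alpha\ge\rho$ in the second case (again equivalent to $\alpha\ge a/n$), so the output is correctly non-increasing. When $h=n$ the hypothesis $0<\alpha\le a/h=a/n$ forces the first branch, consistent with $k+d<n$ excluding $k=h=n$. The main obstacle I anticipate is the formal legitimacy of invoking Theorem~\ref{th:minimo} with infinite $M_i$; I would settle it precisely through the limiting argument above, noting that once $d$ is pinned to $0$ the conclusion is insensitive to the (unused) upper bounds.
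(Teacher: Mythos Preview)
Your derivation is correct and is precisely the intended route: the paper does not give an explicit proof but presents the result as a direct corollary of Theorem~\ref{th:minimo} (citing \cite{BCT1}), and your specialization $m_i=\alpha$ for $i\le h$, $m_i=0$ otherwise, $M_i$ unbounded (forcing $d=0$), followed by the case split on $\alpha\lessgtr a/n$ to pin down the minimal $k$, is exactly how one reads it off. The limiting remark on the $M_i$ and the boundary/consistency checks are sound and make the argument self-contained.
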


%If $x_{\ast }\left( S^{\prime }\right) $ $\in S^{^{\prime \prime }}$ it
%implies that $x_{\ast }\left( S^{^{\prime \prime }}\right) =x_{\ast }\left(
%S^{\prime }\right) ,$ but if $x_{\ast }\left( S^{\prime }\right) $ $\notin
%S^{^{\prime \prime }}$ we deduce that $x_{\ast }\left( S^{^{\prime \prime
%}}\right) \trianglerighteq x_{\ast }\left( S^{\prime }\right) $ and so $%
%f\left( x_{\ast }\left( S^{^{\prime \prime }}\right) \right)
%\trianglerighteq f\left( x_{\ast }\left( S^{^{\prime }}\right) \right) .$
%Similarly, it is possible to proceed for the maximal element.

\section{Main results}\label{bou}

A huge and extended literature focuses on the localization of the Kirchhoff Index. A variety of general bounds for $K(G)$ have been found in terms of invariants of $G$, such as $|V|$, $|E|$, etc. (see, for instance, \cite{BCPT1},\cite{Pal2011},\cite{Wang} and \cite{ZhouTrina1}). \\
Less attention has been paid in measuring the highest and the lowest values that this graph metric can assume after certain changes in the network structure. In this regard, bounds can be useful for robustness assessment under topological changes, such as links addition or removal. 
In this context, Wang et al. in \cite{WangP} provided the following lower bounds:
\begin{equation}
K(G+e)\geq \dfrac{K(G)}{1+ \dfrac{n \rho}{2}},
\label{KG2}
\end{equation}
where $K(G+e)$ is the Kirchhoff index computed on the graph $G+e$ resulting after one link addition to $G$ and $\rho$ is the diameter of $G$, and:

\begin{equation}
K(G-e)\geq\dfrac{n(n-1)^2}{
	2(m-1)},
\label{KG1}
\end{equation}
where $K(G-e)$ is the Kirchhoff index computed on the graph $G-e$ resulting after one link removal to $G$.

In this paper, we aim at obtaining new lower bounds after that one or more links are added or removed. 
In case of \emph{links addition} we provide the following theorem:

\begin{theorem} 
	For any simple graph  $G'(n,m+h)$, obtained by the simple and connected graph $G(n,m)$ by adding $1\leq h \leq \dfrac{n(n-1)}{2}-m $ links\footnote{Notice that when $h = \dfrac{n(n-1)}{2}-m$ the complete graph is attained.}, we have:
	\begin{equation}
	K(G') \geq n\left(\frac{1}{d_{1}+1}+\frac{1}{d_{2}}+\frac{(n-3)^2}{2m+2h-1-d_{1}-d_{2}}\right),
	\label{KG'}
	\end{equation}
	where $d_{1}$ and $d_{2}$ are the first and the second largest degrees of vertices of $G$.
\end{theorem}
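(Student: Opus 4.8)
The plan is to pass to the spectral representation \eqref{kirchhoff} and write $K(G')=n\sum_{i=1}^{n-1}\frac{1}{\mu_i'}$, where $\mu_1'\geq\cdots\geq\mu_{n-1}'>\mu_n'=0$ are the Laplacian eigenvalues of $G'$ (note that $G'$, obtained from the connected graph $G$ by adding links, is itself connected, so $\mu_{n-1}'>0$). Collecting the nonzero eigenvalues in the vector $\mathbf{u}=(\mu_1',\ldots,\mu_{n-1}')\in\mathbb{R}^{n-1}$, the goal becomes a lower bound for $\phi(\mathbf{u})=\sum_{i=1}^{n-1}\frac{1}{\mu_i'}$. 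Since $g(x)=1/x$ is convex on $(0,\infty)$, the separable function $\phi(\mathbf{x})=\sum_{i=1}^{n-1}g(x_i)$ is Schur-convex; hence, by the principle recalled just before \eqref{1.a}, once a set $S$ containing $\mathbf{u}$ is shown to possess a minimal element $\mathbf{x}_\ast(S)$ for the majorization order, one has $\phi(\mathbf{u})\geq\phi(\mathbf{x}_\ast(S))$, and the bound follows upon multiplying by $n$.

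First I would pin down the constraint set in dimension $n-1$. The trace identity $\sum_{i=1}^{n}\mu_i'=2(m+h)$ together with $\mu_n'=0$ places $\mathbf{u}$ in $\Sigma_a$ with $a=2m+2h$. For the lower bounds I would use the spectral inequalities recalled in Section \ref{not} applied to $G'$, namely $\mu_1'\geq 1+d_1(G')$ and $\mu_2'\geq d_2(G')$. To express these through the degrees of $G$ alone, I would first record the elementary monotonicity of the ordered degree sequence under edge insertion: adding an edge raises two entries of the (unsorted) degree vector by one, and sorting preserves componentwise dominance, so $d_i(G')\geq d_i(G)$ for every $i$; in particular $\mu_1'\geq 1+d_1$ and $\mu_2'\geq d_2$. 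Thus $\mathbf{u}\in S:=\Sigma_a\cap\{\mathbf{x}\in\mathbb{R}^{n-1}:x_1\geq 1+d_1,\ x_2\geq d_2,\ x_i\geq 0\}$.

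Next I would invoke Theorem \ref{th:minimo} with lower bounds $m_1=1+d_1$, $m_2=d_2$, $m_3=\cdots=m_{n-1}=0$ and no active upper bounds, testing $k=2$ and $d=0$. This yields the averaged level $\rho=\dfrac{a-(1+d_1)-d_2}{(n-1)-2}=\dfrac{2m+2h-1-d_1-d_2}{n-3}$ and the minimal element $\mathbf{x}_\ast(S)=[\,1+d_1,\ d_2,\ \rho^{\,n-3}\,]$. Evaluating $\phi$ at this point and using $(n-3)\cdot\frac{1}{\rho}=\dfrac{(n-3)^2}{2m+2h-1-d_1-d_2}$ reproduces exactly the right-hand side of \eqref{KG'}; note that $\rho>0$ is automatic, since $d_1+d_2\leq 2m$ forces $2m+2h-1-d_1-d_2\geq 2h-1\geq 1$.

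The main obstacle is to certify that the choice $k=2$, $d=0$ actually satisfies the hypotheses of Theorem \ref{th:minimo}, so that the $\mathbf{x}_\ast(S)$ above is the genuine minimal element rather than a mere candidate. Condition 2) requires $m_{3}=0\leq\rho\leq M_{n-1}=+\infty$, which holds, but one must also check that $k=2$ is the \emph{smallest} admissible index, i.e. that both leading constraints are binding ($1+d_1>\rho$ and $d_2>\rho$). The delicate case is the second inequality: were $d_2\leq\rho$, the true minimizer would pin only the first coordinate ($k=1$) and give a different, strictly smaller value of $\phi$, so the stated closed form would fail to be a lower bound. I would therefore derive the binding conditions from the spectral data, combining $1+d_1\geq\tfrac{2(m+h)}{n}$ with the defining inequalities for $\rho$ (or, if necessary, stating them as the operating regime of the bound). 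Once this is settled, the reduction of $\phi(\mathbf{x}_\ast(S))$ to \eqref{KG'} is purely algebraic.
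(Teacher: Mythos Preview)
Your approach is essentially the same as the paper's: place the nonzero Laplacian eigenvalues of $G'$ in the constraint set $S^{G'}=\Sigma_{2m+2h}\cap\{\mu_1\geq 1+d_1,\ \mu_2\geq d_2\}$, compute its majorization-minimal element via Theorem~\ref{th:minimo}/Corollary~\ref{theorem:S_2}, and evaluate the Schur-convex function $\phi(\mathbf{x})=\sum 1/x_i$ there. One minor difference: to obtain $\mu_1(G')\geq 1+d_1$ and $\mu_2(G')\geq d_2$ you use the monotonicity of the sorted degree sequence under edge insertion and then apply the spectral inequalities directly to $G'$, whereas the paper applies the inequalities to $G$ and lifts them to $G'$ via the interlacing property $\mu_j(G')\geq\mu_j(G)$. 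Both routes are valid and yield the same constraint set.

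Regarding the obstacle you flag: you are right that Theorem~\ref{th:minimo} returns $[\,1+d_1,\,d_2,\,\rho^{\,n-3}\,]$ only when $d_2\geq\rho$, equivalently $2m+2h\leq 1+d_1+(n-2)d_2$; otherwise $k=2$ is not the smallest admissible index and the minimizer pins fewer coordinates. The paper does not derive this from the spectral data either; it simply inserts it as an explicit hypothesis inside the proof (``Under the assumption $2m+2h\leq 1+d_1+(n-2)d_2$\ldots'') before invoking Corollary~\ref{theorem:S_2}. So your proposed fallback of stating it as the operating regime of the bound matches exactly what the paper does, and your scepticism about deriving it unconditionally is well placed: for graphs with small $d_2$ (e.g.\ the star $K_{1,n-1}$) the condition can fail already at $h=1$.
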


\begin{proof} 
We first prove that the localization of the first and the second eigenvalues of the new graph obtained under links addition depends on the first and second largest degrees of the original graph. After one link addition ($h=1$), graph $G(n,m)$ becomes graph $G'(n,m+1)$, whose eigenvalues can be defined as $\mu_{j}(G')=\mu_{j}+\Delta \mu_{j}$. Hence, $\overset{n-1}{\underset{j=1}{\sum }}\left(\mu _{j}+\Delta \mu_{j}\right)=2m+2$ holds. \\
The increase $\Delta\mu _{j}$ of the eigenvalue satisfies 
\begin{equation}
\overset{n-1}{\underset{j=1}{\sum }}\Delta\mu _{j}=2m+2-\overset{n-1}{\underset{j=1}{\sum }}\mu _{j}=2m+2-2m=2.
\label{sumdeltamu}
\end{equation}
By the interlacing property (see \cite{VanMieghem}), we have 
\begin{equation}
\mu _{j}\leq \mu _{j}+\Delta \mu_{j}\leq \mu _{j-1},
\label{eq:int}
\end{equation}
showing that $0 \leq \Delta \mu_{j} \leq 2$ for $j=2,...,n-1$. \\
From (\ref{eq:int}), it easily follows that $\mu _{2}(G)\leq \mu _{2}(G')\leq \mu _{1}(G)\leq \mu _{1}(G')$ and by making use of the well-known inequalities $\mu _{1}(G)\geq 1+d_{1}(G)$ and $\mu _{2}(G)\geq d_{2}(G)$, we have that:
\begin{equation*}
\mu _{1}(G')\geq 1+d_{1}(G); \,\,\,  
\mu _{2}(G')\geq d_{2}(G).
\end{equation*}

Extending the previous reasoning to $G'(n,m+h)$, with  $1\leq h \leq \dfrac{n(n-1)}{2}-m $,  we now obtain $\overset{n-1}{\underset{j=1}{\sum }}\left(\mu _{j}+\Delta \mu_{j}\right)=2m+2h$ and $\overset{n-1}{\underset{j=1}{\sum }}\Delta\mu _{j}=2h$. By the interlacing property, we have
 $0 \leq \Delta \mu_{j} \leq 2h$ for $j=2,...,n-1$. As before, in a iterative way, we can show that
\begin{equation}
\mu _{1}(G')\geq 1+d_{1}(G); \,\,\,  \mu _{2}(G')\geq d_{2}(G). 
\label{ineqh1}
\end{equation}
In what follows $d_{1}(G)=d_{1}$ and $d_{2}(G) =d_{2}.$ \\
The inequalities in (\ref{ineqh1}) can be encoded in the following set:
\begin{equation*}
S^{G'}=\Sigma _{2m+2h}\cap \{\mathbf{\mu }\in \mathbb{R}^{n-1}:\mu _{1}(G')\geq
1+d_{1},\,\,\mu _{2}(G')\geq d_{2}\}.
\end{equation*}

Under the assumption $2m+2h\leq 1+d_{1}+(n-2)d_{2}$, by Corollary 2 the minimal element is given by:
\begin{equation*}
\mathbf{x}_{\ast }(S^{G'})=\left[ 1+d_{1},d_{2},\underbrace{\frac{2m+2h-1-d_{1}-d_{2}%
	}{n-3},\cdots ,\frac{2m+2h-1-d_{1}-d_{2}}{n-3}}_{n-3}\right],
\end{equation*}
and, by the fact that the Kirchhoff index, as defined in formula (\ref{kirchhoff}), is a Schur-convex funcion of its arguments, we get:
\begin{equation}
K(G') \geq n\left(\frac{1}{d_{1}+1}+\frac{1}{d_{2}}+\frac{(n-3)^2}{2m+2h-1-d_{1}-d_{2}}\right).
\end{equation}

This lower bounds displays strict monotonicity when edges are added and this is a desirable property for a robustness quantifier. Furthermore, it depends only on the features of graph $G$ (i.e., $m$, $n$, $d_{1}$ and $d_{2}$).

\end{proof}

\vspace{10pt}
When we consider the case of \emph{links removal}, we provide the following theorem:
\begin{theorem} 
	For any simple connected graph  $G''(n,m-h)$, obtained by the simple and connected graph $G(n,m)$ by removing $1\leq h \leq n-1-m $ links, we have:
	\begin{equation}
K(G'') \geq n\left(\frac{1}{d_{1}+1-2h}+\frac{1}{d_{2}-2h}+\frac{(n-3)^2}{2m+2h-1-d_{1}-d_{2}}\right),
	\label{KG''}
	\end{equation}
	where $d_{1}$ and $d_{2}$ are the first and the second largest degrees of vertices of $G$ and $h<\frac{d_{2}}{2}$.
\end{theorem}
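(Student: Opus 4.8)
The plan is to follow the architecture of the links-addition proof, the essential new feature being that edge removal lowers the Laplacian spectrum, so the lower bounds on the two largest eigenvalues acquire a correction $-2h$. First I would quantify the spectral effect of deleting $h$ edges. Writing $L(G)=L(G'')+\sum_{\ell=1}^{h}L_{e_\ell}$, with each $L_{e_\ell}$ the positive semidefinite rank-one Laplacian of a removed edge, one has $L(G)\succeq L(G'')$ and hence $\mu_j(G'')\le \mu_j(G)$ for every $j$. Setting $\mu_j(G'')=\mu_j(G)-\Delta\mu_j$ then gives $\Delta\mu_j\ge 0$, while matching traces yields $\sum_{j=1}^{n-1}\Delta\mu_j=2m-2(m-h)=2h$. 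Nonnegative increments summing to $2h$ are individually at most $2h$, exactly mirroring the bound $\Delta\mu_j\le 2h$ used in the addition case.

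Next I would feed these increments into the universal inequalities $\mu_1(G)\ge 1+d_1$ and $\mu_2(G)\ge d_2$. Since $\mu_1(G'')=\mu_1(G)-\Delta\mu_1\ge \mu_1(G)-2h$, and likewise for the second eigenvalue, I obtain
\[
\mu_1(G'')\ge 1+d_1-2h,\qquad \mu_2(G'')\ge d_2-2h,
\]
where the hypothesis $h<d_2/2$ guarantees that the second bound, and a fortiori the first since $d_1\ge d_2$, stays strictly positive. I would then collect these constraints and the fixed trace into
\[
S^{G''}=\Sigma_{2m-2h}\cap\{\mu\in\mathbb{R}^{n-1}:\mu_1\ge 1+d_1-2h,\ \mu_2\ge d_2-2h\},
\]
and invoke Theorem \ref{th:minimo} with the first two coordinates at their lower bounds ($k=2$, $d=0$). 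Provided the feasibility inequality $2m+2h-1-d_1-d_2\le (n-3)(d_2-2h)$ holds, which enforces the ordering $1+d_1-2h\ge d_2-2h\ge\rho$, the minimal element is
\[
\mathbf{x}_\ast(S^{G''})=\Bigl[\,1+d_1-2h,\ d_2-2h,\ \underbrace{\rho,\ldots,\rho}_{n-3}\,\Bigr],\qquad \rho=\frac{2m+2h-1-d_1-d_2}{n-3}.
\]
Because $K$ is Schur-convex in the Laplacian eigenvalues, evaluating it at $\mathbf{x}_\ast(S^{G''})$ delivers the claimed bound \eqref{KG''}.

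The delicate point is not the spectral bookkeeping but the feasibility condition securing the form of the minimal element. If $\rho>d_2-2h$ the constraint on $\mu_2$ ceases to bind and the minimizer flattens toward the uniform vector (as in Corollary \ref{theorem:S_2}), so the displayed expression is no longer the true minimum; for instance $K_4$ with one edge deleted, for which $2(m-h)$ exceeds $(1+d_1-2h)+(n-2)(d_2-2h)$, lies outside the regime where this minimal element applies. The crux is therefore to carry along the admissible range for $h$ together with $h<d_2/2$ and the ordering inequality above, just as the companion condition $2m+2h\le 1+d_1+(n-2)d_2$ was imposed in the addition proof, and to verify that they persist throughout the argument.
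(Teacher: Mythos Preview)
Your proposal is correct and follows essentially the same approach as the paper: bound each $\Delta\mu_j$ between $0$ and $2h$ via the trace identity, feed this into $\mu_1\ge 1+d_1$ and $\mu_2\ge d_2$ to obtain the shifted constraints, and then minimize the Schur-convex Kirchhoff functional over the resulting set $S^{G''}$ using the majorization machinery. You are in fact slightly more careful than the paper, which does not explicitly record the feasibility condition $2m+2h-1-d_1-d_2\le (n-3)(d_2-2h)$ needed for the displayed minimal element in the removal case (it only states the analogous hypothesis in the addition proof).
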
 

\begin{proof} 
	Let us consider $G''(n,m-h)$ with $1\leq h \leq n-1-m $. \\
	Let $\mu_{j}(G'')=\mu_{j}-\Delta \mu_{j}$. Since $\overset{n-1}{\underset{j=1}{\sum }}\left(\mu _{j}-\Delta \mu_{j}\right)=2m-2h$ we have that $\overset{n-1}{\underset{j=1}{\sum }}\Delta\mu _{j}=2h$. 
Hence, $0 \leq \mu_{j}(G)-\mu _{j}(G'') \leq 2h$ with $j=1,...,n-1$,

\begin{equation}
\mu _{1}(G'')\geq 1+d_{1}-2h \,\,\,  ,\mu _{2}(G'')\geq d_{2}-2h,
\label{ineqh2}
\end{equation}
with $h<\frac{d_{2}}{2}.$

We can make use of the inequalities in (\ref{ineqh2}) dealing with the set:
\begin{equation*}
S^{G''}=\Sigma _{2m-2h}\cap \{\mathbf{\mu }\in \mathbb{R}^{n-1}:\mu _{1}(G'')\geq
1+d_{1}-2h,\,\,\mu _{2}(G'')\geq d_{2}-2h\}
\end{equation*}%
%The minimal element is
%\begin{equation*}
%\mathbf{x}_{\ast }(S^{G''})=\left[ 1+d_{1},d_{2},\underbrace{\frac{2m-1-d_{1}-d_{2}%
%}{n-3},\cdots ,\frac{2m-1-d_{1}-d_{2}}{n-3}}_{n-3}\right]
%\end{equation*}

and we get:
\begin{equation}
K(G'') \geq n\left(\frac{1}{d_{1}+1-2h}+\frac{1}{d_{2}-2h}+\frac{(n-3)^2}{2m+2h-1-d_{1}-d_{2}}\right),
\label{KG''}
\end{equation}
with $h<d_{2}/2.$

\end{proof}

It is noteworthy that we can also easily recover bound (\ref{KG1}) by using majorization order, applying the well-known results provided by Marshall and Olkin in \cite{Marshall}. Additionally, since the set $S^{G''}$ is a more specific closed set of constraints with respect to $S=\Sigma_{2m-2h}\cap \{\mathbf{\mu }\in \mathbb{R}^{n-1}: \mu _{1}\geq
\mu _{2}\geq ...\geq \mu _{n-1} \}$, by (\ref{1.a}) we have that bound (\ref{KG''}), for $h=1$, is always tighter than bound (\ref{KG1}).

\section{Empirical analysis}\label{num}
In this Section, we compare our bounds (\ref{KG'}) and 	(\ref{KG''}) with those existing in the literature. We initially compute them for graphs generated by using the Erd\'os-R\'enyi  (ER) model $G_{ER}(n; p)$ (see \cite{Boll},\cite{ER59},\cite{ER60}) where links are included with probability $p$ independent from every other link. Graphs have been derived randomly and by ensuring that are connected. Hence, given the random graph $G$, either one link, between two distinct and not adjacent vertices, is randomly added or one exiting link is removed, obtaining the graphs $G'$ and $G''$ respectively\footnote{It is noteworthy that, in order to obtain a connected graph $G''$, we discard simulations where the obtained graph is not connected.}. \\
To this end, Table \ref{tab:table1} compares alternative lower bounds of $K(G')$. In this case, we initially generate a random graph $G$
with the $G_{ER}(n; 0.5)$ model and considering different number of vertices. The value of the Kirchhoff Index and the density have been reported. As expected, we have that the ratio, between the actual number of links and the maximum possible number of links, moves around $p$. Hence, we have medium clustered networks. We proceed then comparing existing lower bounds of $K(G')$. Each graph $G'$ has been derived, as previously described, after one link addition. It is noteworthy that bound (\ref{KG'}) has the best performance, while bound (\ref{KG2}), that has been proposed in \cite{WangP}, leads to a very rough approximation of $K(G')$. This behaviour is mainly explained by the presence of the diameter $\rho$ at the denominator of formula (\ref{KG2}). Although the $ER$ model has a very low diameter\footnote{Notice that the diameter of $ER$ model tends to 2 for larger graphs}, the limitation provided in \cite{WangP} is very far from the exact value. Obviously for graphs with a large diameter, even worse results would be obtained. 

In case of graphs $G''$ (see Table \ref{tab:table2}), derived after one link removal, we observe that bound (\ref{KG''}) has the best performance. When large graphs are considered, the improvements are very slight with respect to bound (\ref{KG1}). \\
For the sake of brevity, we have only reported the results for the ER graphs, but we have numerically checked that our proposal also improves existing bounds on other classes of graphs (as Watts and Strogatz \cite{WS} or Barab\'asi-Albert model \cite{BA}).

\begin{table}[!h]
	\centering
	\caption{Comparison between lower bounds of $K(G)$ after one link addition. Graphs are randomly generated by using Erd\"os-R\'enyi (ER) model, varying the number of vertices $n$ and with probability $p=0.5$.}
	\begin{tabular}{|l | c| c | c |c | c| c| c|}
		\hline
		$n$	& $K(G)$	& $K(G')$	& Our bound (\ref{KG'}) & Bound Wang et al. (\ref{KG2}) & Density \\
		\hline
		10	&19.86	&18.97	&16.53	&1.81  & 0.55	 \\
		20	&48.71	&47.80	&43.57	&1.57 & 0.46   \\
		30	&66.21	&65.70	&60.80	&2.14	& 0.47 \\
		40	&78.64	&78.49	&75.63	&1.92  & 0.51	\\
		50	&106.76	&106.59	&102.06	&2.09  & 0.48	\\
		100	&191.36	&191.28	&188.38	&1.89 & 0.53 \\	
		200	&409.23	&409.19	&405.21	&2.04 & 0.49    \\
		500	&1001.28	&1001.26	&997.12	&2.00  & 0.50	\\
		1000	&1999.25	&1999.24	&1995.26	&2.00 & 0.50\\ \hline \hline
	\end{tabular}
\label{tab:table1}
\end{table}

\begin{table}[!h]
\centering
\caption{Comparison between lower bounds of $K(G)$ after one link removal. Graphs are randomly generated by using Erd\"os-R\'enyi  (ER) model, varying the number of vertices $n$ and with probability $p=0.5$.}
\begin{tabular}{|l | c| c | c |c |c| }
	\hline	
	$n$	& $K(G)$	& $K(G'')$	& Our bound (\ref{KG''}) & Bound Wang et al. (\ref{KG1}) & Density\\
	\hline
	10	&19.86	&21.17	& 17.78	& 17.61 & 0.55\\
	20	&48.71	&49.26	& 44.31	& 44.02 & 0.46\\
	30	&66.21	&66.69	& 61.16	& 60.94 & 0.47\\
	40	&78.64	&78.81	& 75.87	& 75.67 & 0.51\\
	50	&106.76	&106.92	& 102.28	& 102.08 & 0.48\\
	100	&191.36	&191.43	& 188.49	& 188.41 & 0.53\\
	200	&409.23	&409.27	& 405.28	& 405.21 & 0.49\\
	500	&1001.28	&1001.29	& 997.15	& 997.10 & 0.50 \\
	1000	&1999.25	&1999.26	&1995.27	&1995.24 & 0.50  \\ \hline \hline
\end{tabular}
\label{tab:table2}
\end{table}

We now extend the analysis to the same class of graphs but varying the probability of attachment and assuming to add (or remove) more than one link. Two alternative values ($p = 0.5$ and $p = 0.9$ respectively) are compared in Figures \ref{fig:fig1} and \ref{fig:fig2}. In particular, by applying the same methodology previously described, we randomly generate a graph $G$ and then we randomly add (or remove) $h$ links (with $h\geq1$). To the best of our knowledge, no bounds have been provided in the literature for $h>1$.
On one hand, we observe that a good approximation is also assured when a large number of links is added or removed. On the other hand, when the attachment probability increases, our bound provides a best proxy of the exact value since the graph $G'$ is closer to the complete graph. \\
Finally, as expected, when a lower number of vertices is considered (see Figure \ref{fig:fig2}), each link has a greater effect on the robustness of the network. Indeed, we observe  a remarkable impact on the Kirchhoff Index. Even in this case, bounds enable to capture the behaviour of the robustness measure.

\begin{figure}
	\centering
\includegraphics[height=5cm,width=5cm]{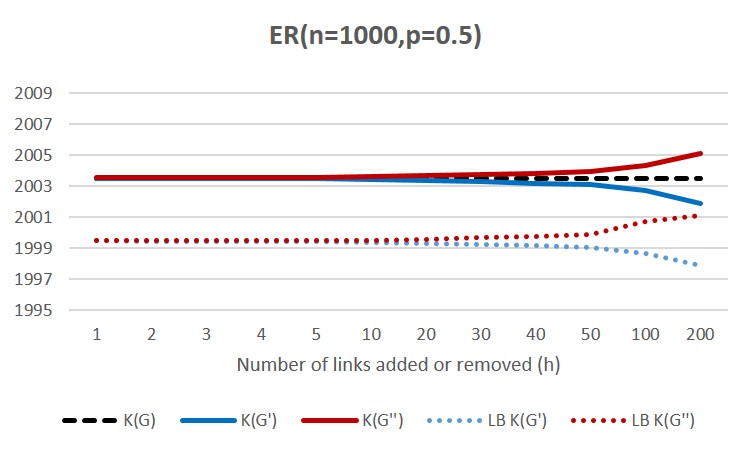}
\includegraphics[height=5cm,width=5cm]{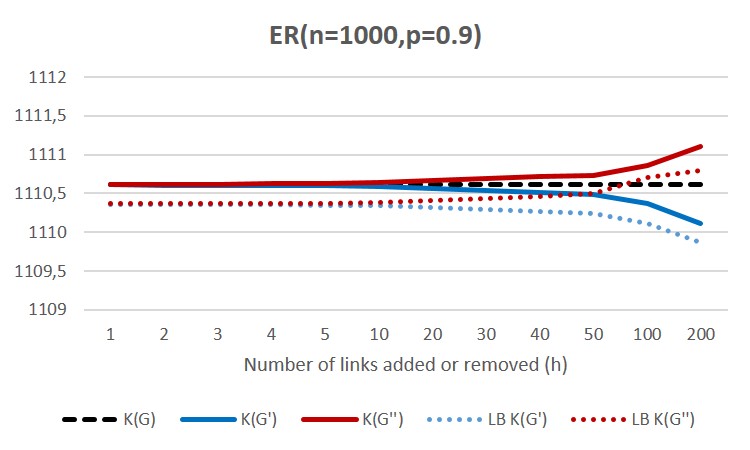}
\caption{On the left-hand side, we report the value of the Kirchhoff Index $K(G)$ (dotted black line) for a graph $G(n,m)$ randomly generated with a $G_{ER}(1000,0.5)$ model. Graphs $G'(n,m+h)$ and $G''(n,m-h)$ are obtained starting from the graph $G$ and varying the number of edges $h$ added or removed. Solid lines display the exact values of the Kirchhoff Index $K(G')$ and $K(G'')$. Dotted lines show the lower bounds (\ref{KG'})  and (\ref{KG''}) for each graph. On the right-hand side, the same values are obtained starting from a graph $G$ derived by $G_{ER}(1000,0.9)$ model.}      
\label{fig:fig1} 
\end{figure}

\begin{figure}
	\centering
\includegraphics[height=5cm,width=5cm]{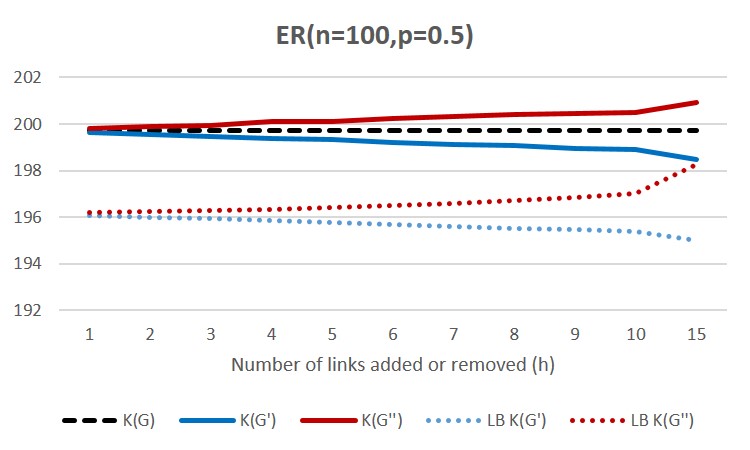}
\includegraphics[height=5cm,width=5cm]{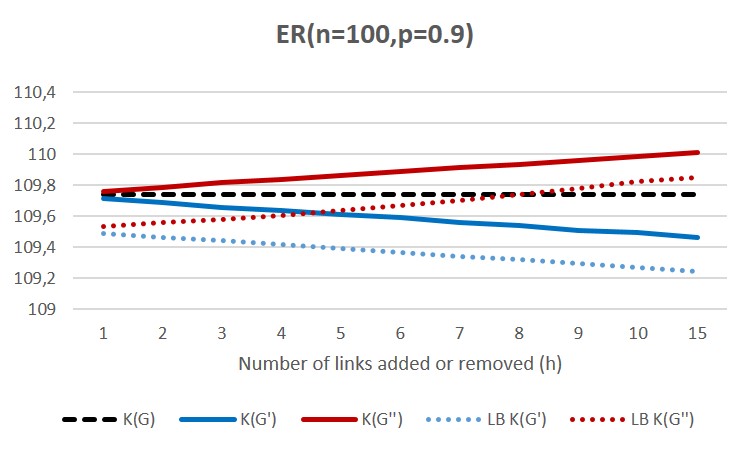}
\caption{On the left-hand side, we report the value of the Kirchhoff Index $K(G)$ (dotted black line) for a graph $G(n,m)$ randomly generated with a $G_{ER}(100,0.5)$ model. Graphs $G'(n,m+h)$ and $G''(n,m-h)$ are obtained starting from the graph $G$ and varying the number of edges $h$ added or removed. Solid lines display the exact values of the Kirchhoff Index $K(G')$ and $K(G'')$. Dotted lines show the lower bounds (\ref{KG'})  and (\ref{KG''}) for each graph. On the right-hand side, the same values are obtained starting from a graph $G$ derived by $G_{ER}(100,0.9)$ model.}     % 
\label{fig:fig2}   
\end{figure}

\section{Conclusions}
By using an approach for localizing some relevant graph topological indices based on the optimization of Schur-convex or Schur-concave functions, we provide novel lower bounds on the Kirchhoff Index of graphs obtained after one link addition or removal. Furthermore, we also define new limitations when the topological change regards more than one link. \\
Analytical and numerical results show the performance of these bounds on different graphs. In particular, the bounds perform better with respect to best-known results in the literature.
Further research regards a generalization to weighted and/or directed networks and the analysis of the correlation between alternative topological metrics.

%\begin{acknowledgements}
%If you'd like to thank anyone, place your comments here
%and remove the percent signs.
%\end{acknowledgements}

\bibliographystyle{plain}
\bibliography{biblio}

\end{document}